\documentclass[10pt]{article}
\usepackage{amsmath,amssymb,amsthm,latexsym}
\usepackage{graphicx}
\renewenvironment{proof}{\noindent{\bf Proof.}}{~~$\Box$}
\theoremstyle{plain}
\newtheorem{thrm}{Theorem}[section]
\newtheorem{lemm}[thrm]{Lemma}
\newtheorem{prot}[thrm]{Proposition}

\newtheorem{corl}[thrm]{Corollary}

\theoremstyle{definition}
\newtheorem{dfnt}[thrm]{Definition}
\newtheorem{remk}[thrm]{Remark}
\newtheorem{exa}[thrm]{Example}

\numberwithin{equation}{section}

\begin{document}
\title{\bf On weak dominance of t-conorms over t-norms
}
\author{
Shuangquan Li\quad Yao Ouyang\thanks{Corresponding author.\quad Email:1760851309@qq.com(S.
Li); oyy@huznu.edu.cn(Y. Ouyang)}\\
\small\it Faculty of Science, Huzhou Normal University, Huzhou, Zhejiang 313000, China}

\date{}
\maketitle
\begin{abstract}
\hspace{4mm}
The weak dominance of aggregation operators, particularly between triangular norms (t-norms) and triangular conorms (t-conorms), has attracted considerable attention in aggregation operator theory. While several characterizations have been obtained for Archimedean and continuous cases, a general criterion for continuous t-conorms over continuous t-norms remains to be fully clarified. In this paper, we provide a complete characterization of a continuous t-conorm weakly dominating a continuous t-norm. We first reduce the problem for ordinal sum operators to that for their single Archimedean components, and then express the weak dominance condition entirely in terms of the additive generators of these components. Our approach covers both strict and nilpotent cases uniformly, and recovers the known results for Archimedean operators as a special case.

{\it Keywords:} Weak dominance; Triangular norms; Triangular conorms; Additive generator
\end{abstract}

\section{Introduction}
The origins of t-norms can be traced back to Menger's seminal paper ``Statistical Metrics'' \cite{M42}. In contrast to classical metric spaces, in which distances are given by nonnegative real numbers, Menger used a class of probability distribution functions on $[0, \infty]$ (denoted by $\Delta^+$) to characterize the distance between elements. When extending the triangle inequality, Menger naturally made use of a class of functions resembling t-norms. The axiomatic definition of t-norms is due to Schweizer and Sklar, who gave the current definition in their papers on statistical metric spaces (subsequently called probabilistic metric spaces); see \cite{SS60,SS61}.

It is well known that the Cartesian product of two metric spaces, endowed with the product metric, is again a metric space. However, for the Cartesian product of two probabilistic metric spaces to also be a probabilistic metric space, an additional condition, called dominance, is required. The dominance relation also plays an important role in the theory of fuzzy relations and aggregation operators \cite{DM98,SBKM06}. Consequently, the dominance between t-norms has attracted considerable attention \cite{S05,S08,SDD09,S84}.

Another closely related notion is weak dominance of aggregation operators. It should be noted that, despite the terminology, weak dominance is not in general a consequence of dominance; nevertheless, if two operators have a common identity element, then dominance entails weak dominance. The weak dominance relation among associative aggregation operators (e.g., t-norms and t-conorms) has been extensively studied.

Alsina et al. \cite{AFS06} characterized the weak dominance of two strict t-norms. Li et al. \cite{LZWL23} investigated the weak dominance between t-norms and t-conorms. They characterized the weak dominance of an Archimedean t-conorm over an Archimedean t-norm,  and obtained some conclusions for the case where the t-conorm and t-norm share the same ordinal sum decomposition. In \cite{LQ24}, Liu and Qin reexamined the weak dominance between t-norms and t-conorms. While they give an elegant characterization of a continuous t-norm weakly dominating another, their characterization of a continuous ordinal sum t-conorm weakly dominating a continuous ordinal sum t-norm relies on another such pair.



In this paper, we continue to investigate the weak dominance of continuous t-conorms over continuous t-norms. We first reduce the weak dominance problem of an ordinal sum t-conorm over an ordinal sum t-norm to that between their single components. Then, we characterize this reduced weak dominance in terms of the additive generators of the corresponding components of the t-conorm and the t-norm. In this way, we obtain a complete characterization of the weak dominance of an ordinal sum t-conorm over an ordinal sum t-norm purely in terms of the additive generators of their components. As a special case, we derive the weak dominance condition for continuous Archimedean t-conorms over continuous Archimedean t-norms, thereby covering the results of Li et al. \cite{LZWL23}. It is worth pointing out that, with our approach, there is no need to distinguish between the strict and nilpotent cases, which simplifies the argument considerably.

\section{Preliminaries}

We recall here the concepts and notations needed in the sequel; see \cite{AFS06,KMP00,SS83} for a comprehensive treatment.

\begin{dfnt}
Let $[a, b]$ be a subinterval of $[0, 1]$. A mapping $T\colon [a, b]^2\to [a, b]$ (respectively, $S\colon [a, b]^2\to [a, b]$) is called a \emph{t-norm} (respectively, \emph{t-conorm}) on $[a, b]$ if it is associative, commutative, nondecreasing in each variable, and satisfies the boundary condition
\[
T(b, x)=x,\, \forall\, x\in[a, b]
\quad\text{(respectively, } S(a, x)=x,\, \forall\, x\in[a, b]\text{)}.
\]
When $[a, b]=[0, 1]$, we simply call $T$ a t-norm and $S$ a t-conorm.
\end{dfnt}

A t-norm $T$ on $[a, b]$ is said to be \emph{continuous} if it is continuous as a two-place function. A continuous t-norm $T$ on $[a, b]$ is called \emph{Archimedean} if $T(x, x)<x$ for all $x\in ]a, b[$. It is well-known that a t-norm $T$ on $[a, b]$ is continuous Archimedean if and only if there is a continuous and strictly decreasing function $t\colon [a, b]\to [0, \infty]$ with $t(b)=0$ such that
\[
T(x, y)=t^{(-1)}(t(x)+t(y)),
\]
where $t^{(-1)}$ is the pseudo-inverse of the \emph{additive generator} $t$ defined by $t^{(-1)}(u)=t^{-1}\bigl(\min\{u, t(a)\}\bigr)$. A continuous Archimedean t-norm $T$ on $[a, b]$ is said to be \emph{strict} if $t(a)=\infty$ and to be \emph{nilpotent} if $t(a)<\infty$.

A t-norm $T$ on $[a, b]$ is said to have \emph{zero divisors} if there exist $x, y\in ]a, b[$ such that $T(x, y)=a$. Clearly, a strict t-norm has no zero divisors, whereas for a nilpotent t-norm every $x\in ]a, b[$ is a zero divisor.

Typical t-norms include the weakest t-norm $T_D$, the strongest t-norm $T_M$, the product t-norm $T_P$, and the Lukasiewicz t-norm $T_L$. They are defined on $[0,1]^2$ as follows:
\[
\begin{aligned}
T_D(x, y) &=
\begin{cases}
\min(x, y) & \text{if } \max(x, y)=1,\\
0          & \text{otherwise,}
\end{cases} \\[4pt]
T_M(x, y) &= \min(x, y), \\[2pt]
T_P(x, y) &= xy, \\[2pt]
T_L(x, y) &= \max(0, x+y-1).
\end{aligned}
\]
Note that $T_M$ is continuous and non-Archimedean, $T_P$ is strict, whereas $T_L$ is nilpotent.

Let $\{]a_i, b_i[\}_{i\in I}$ be a family of pairwise disjoint subintervals of $[0, 1]$, and let $T_i$ be a t-norm on $[a_i,b_i]$ for each $i\in I$. Define $T\colon [0, 1]^2\to [0, 1]$ by
\[
T(x,y) =
\begin{cases}
T_i(x, y) &\text{if~} (x, y)\in [a_i, b_i]^{2} \text{~for some~} i\in I ,\\
\min\{x, y\}          & \text{otherwise.}
\end{cases}
\]
Then $T$ is a t-norm, called the \emph{ordinal sum} of $\{T_i\}_{i\in I}$, and denoted by $T=(\langle a_i,b_i,T_i\rangle)_{i\in I}$. Note that if $I=\emptyset$, then $T=T_M$.

A fundamental result concerning t-norms is the following representation theorem.

\begin{thrm}
For a function $T\colon [0, 1]^2\to [0, 1]$ the following are equivalent:\\
{\rm(i)} $T$ is a continuous t-norm;\\
{\rm(ii)} $T$ can be uniquely represented as an ordinal sum of continuous Archimedean t-norms {\rm(}with the empty ordinal sum understood as $T_M${\rm)}.
\end{thrm}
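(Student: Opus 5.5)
We prove the two implications separately; the substantial one is (i)$\Rightarrow$(ii). For (ii)$\Rightarrow$(i), first show that an ordinal sum $T=(\langle a_i,b_i,T_i\rangle)_{i\in I}$ of continuous Archimedean t-norms is a t-norm. Commutativity, monotonicity and the boundary condition $T(1,x)=x$ are checked case by case. For associativity, consider a triple $x,y,z$ and check two situations: either all three lie in one $[a_i,b_i]$, or some pair lies in different summands or outside all summands. In the second situation $T$ acts as $\min$ on that pair, and associativity reduces to that of $T_i$ and $\min$, using $T_i(u,v)\le\min(u,v)$ and $T_i(u,v)\ge a_i$. Continuity follows because $T$ agrees with $\min$ at the endpoints $a_i,b_i$ of each summand. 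The possibly infinitely many summands do not create discontinuities, since on a small summand $|T_i(x,y)-\min(x,y)|\le b_i-a_i$, which tends to $0$.

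For (i)$\Rightarrow$(ii), let $T$ be continuous and put $E=\{x: T(x,x)=x\}$, the set of idempotents. By continuity $E$ is closed and contains $0,1$, so $[0,1]\setminus E$ is a countable union of disjoint open intervals $]a_i,b_i[$.

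\textbf{Step 1: $T=\min$ when one argument is idempotent.} Let $e\in E$ and $x\le e$. By the intermediate value theorem applied to $y\mapsto T(e,y)$ on $[0,1]$, we have $x=T(e,y)$ for some $y$. Then
\[T(e,x)=T(e,T(e,y))=T(T(e,e),y)=T(e,y)=x.\]
For $x\ge e$ we get $e=T(e,e)\le T(e,x)\le e$, so $T(e,x)=e$.

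\textbf{Step 2: $T=\min$ across different summands.} If $x\le a_i\le y$ (or $x\le e\le y$ with $e\in E$), then by Step 1 and monotonicity,
\[x=T(x,e)\le T(x,y)\le x.\]
So $T=\min$ off the union of the squares $[a_i,b_i]^2$.

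\textbf{Step 3: each restriction is a t-norm.} On $[a_i,b_i]$, $T$ maps into $[a_i,b_i]$: by Step 1 and monotonicity, $a_i=T(a_i,a_i)\le T(x,y)\le \min(x,y)$. Also $b_i$ is neutral on $[a_i,b_i]$ by Step 1. Hence $T_i:=T|_{[a_i,b_i]^2}$ is a continuous t-norm on $[a_i,b_i]$ with no interior idempotents, i.e.\ it is Archimedean. This yields the representation.

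\textbf{Uniqueness.} The intervals are forced: $E$ is determined by $T$, and each $T_i$ must have no interior idempotents, so the summands are exactly the components of $[0,1]\setminus E$. The $T_i$ are then the restrictions of $T$.

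The main obstacle is the intermediate-value step in Step 1. It is what makes every idempotent act as a neutral-or-absorbing element, and it is where continuity of $T$ is essential.
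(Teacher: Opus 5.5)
The paper does not prove this theorem. It recalls it in the preliminaries as the classical representation theorem for continuous t-norms (see \cite{KMP00}), so there is no in-paper argument to compare against. Your proof is correct and is essentially the standard one: the idempotents form a closed set containing $0$ and $1$, and continuity plus the intermediate value theorem give $T(e,x)=\min\{e,x\}$ for every idempotent $e$. Hence $T=\min$ off the squares over the components of $[0,1]\setminus E$, and the restrictions to those squares are t-norms without interior idempotents, which is exactly the Archimedean property under the paper's definition ($T(x,x)<x$ on $]a,b[$).
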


The dual counterparts of all the above notions can be obtained for t-conorms, since whenever $T$ is a t-norm on $[a, b]$, the function
\[
S(x, y)=b- T(b-x, b-y)
\]
defines a t-conorm on $[a, b]$. For brevity, we merely mention that $S$ is a continuous Archimedean t-conorm on $[a, b]$ if and only if there exists a continuous and strictly increasing function $s\colon [a, b]\to [0, \infty]$ with $s(a)=0$ such that
\[
S(x, y)=s^{(-1)}(s(x)+s(y)),
\]
where $s^{(-1)}$ is the pseudo-inverse of the additive generator $s$, given by $s^{(-1)}(u)=s^{-1}\bigl(\min\{u, s(b)\}\bigr)$.

\section{Main results}
\subsection{Weak dominance: definitions and basic cases}

The weak dominance relation was introduced in \cite{AFS06} for binary operators on $[0, 1]$. Here we are primarily interested in the weak dominance of t-conorms over t-norms.

\begin{dfnt}\cite{AFS06}
Let $T$ be a t-norm and $S$ a t-conorm. We say that $S$ \emph{weakly dominates} $T$, and denoted by $S\gg_w T$, if
\begin{equation}\label{eq:defwd}
S(T(x, y), z)\geq T(S(x, z), y)
\end{equation}
for all $x, y, z\in[0,1]$.
\end{dfnt}

Let us first examine some extremal cases.

\begin{prot}
Let $T$ be a t-norm. Then $S_D\gg_w T$ if and only if $T$ has no zero divisors.
\end{prot}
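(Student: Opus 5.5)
We verify inequality \eqref{eq:defwd} directly from the definition of $S_D$, the drastic t-conorm given by $S_D(x,y)=\max(x,y)$ if $\min(x,y)=0$ and $S_D(x,y)=1$ otherwise. The idea is that $S_D$ returns $1$ unless one argument is $0$. So the left-hand side $S_D(T(x,y),z)$ is large except in degenerate situations, and those are exactly where zero divisors matter.

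For sufficiency, assume $T$ has no zero divisors and fix $x,y,z\in[0,1]$. We split into cases on $z$ and on $T(x,y)$.
\begin{itemize}
\item If $z=0$, then both sides equal $T(x,y)$, since $S_D(u,0)=u$. Hence \eqref{eq:defwd} holds with equality.
\item If $z>0$ and $T(x,y)>0$, the left-hand side equals $1$, and \eqref{eq:defwd} holds trivially.
\item If $z>0$ and $T(x,y)=0$, the absence of zero divisors gives $x=0$ or $y=0$. (A product of two interior points being $0$ is excluded by hypothesis, and $x=1$ or $y=1$ would force the other variable to be $0$ by the boundary condition.)
\begin{itemize}
\item If $y=0$, the right-hand side is $T(S_D(x,z),0)=0$, and we are done.
\item If $x=0$ (with $y\neq 0$), the left-hand side is $S_D(0,z)=z$ and the right-hand side is $T(S_D(0,z),y)=T(z,y)\le z$.
\end{itemize}
\end{itemize}
Thus \eqref{eq:defwd} holds in all cases.

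For necessity, suppose $T$ has a zero divisor, that is, there are $x,y\in\,]0,1[$ with $T(x,y)=0$. We look for $z$ that violates \eqref{eq:defwd}. Choosing $z\in\,]0,1[$, the left-hand side becomes $S_D(0,z)=z$. Since $x,z>0$, we have $S_D(x,z)=1$, so the right-hand side is $T(1,y)=y$. Therefore we need some $z\in\,]0,1[$ with $z<y$, and since $y>0$ any $z\in\,]0,y[$ works. This gives $z<y$, contradicting $S_D\gg_w T$.

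The argument is elementary; the only point needing care is the case analysis in the sufficiency part. There we must check that $T(x,y)=0$ with no zero divisors really forces $x=0$ or $y=0$, which is where the boundary condition $T(1,u)=u$ is used.
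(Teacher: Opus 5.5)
Your proof is correct and takes essentially the same route as the paper: a direct case check for sufficiency, and for necessity a counterexample built from a zero divisor with $z$ chosen below it. The only difference is cosmetic. You use the zero-divisor pair $(x,y)$ directly, whereas the paper first passes to a single $c$ with $T(c,c)=0$ via an unstated monotonicity step, so your version is marginally more self-contained.
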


\begin{proof}
Assume first that $T$ has zero divisors. Then there exist $c\in(0, 1)$ such that $T(c, c)=0$. Fix $z\in ]0, c[$. Since $z>0$,
\[
S_D(T(c, c),z)=S_D(0, z)=z.
\]
On the other hand, $c>0$ and $z>0$, hence $S_D(c, z)=1$, and therefore
\[
T(S_D(c, z), c)=T(1, c)=c.
\]
Because $z<c$, the weak-dominance inequality fails. Thus $S_D\gg_w T$ implies that $T$ has no zero divisors.

Conversely, the case where at least one of $x,y,z$ is zero is immediate from the boundary conditions. For $x,y,z>0$, since $T$ has no zero divisors, we have $T(x, y)>0$. Therefore
\[
S_D(T(x, y), z)=1\ge T(S_D(x, z), y),
\]
so \eqref{eq:defwd} holds. Hence $S_D\gg_w T$.
\end{proof}

\begin{thrm}\label{thrm-SMwdT}
Let $S$ be a t-conorm. Then the following are equivalent:\\
{\rm(i)} $S=S_M$;\\
{\rm(ii)} $S\gg_w T$ for all t-norms $T$.
\end{thrm}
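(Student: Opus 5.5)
For (i)$\Rightarrow$(ii), take an arbitrary t-norm $T$ and $x,y,z\in[0,1]$. We must show
\[
\max\{T(x,y),z\}\ge T(\max\{x,z\},y).
\]
Split according to the order of $x$ and $z$. If $x\ge z$, the right-hand side equals $T(x,y)$, and this is bounded by the left-hand side. If $x<z$, the right-hand side is $T(z,y)$. By monotonicity and the boundary condition $T(z,y)\le T(z,1)=z$, so it is again bounded by the left-hand side. This direction is routine.

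For (ii)$\Rightarrow$(i), assume $S\gg_w T$ for every t-norm $T$ and specialize to $T=T_M$. The goal is to show $S(a,b)\le\max\{a,b\}$ for all $a,b$; since every t-conorm satisfies $S\ge S_M$, this gives $S=S_M$. Take $x,z$ arbitrary and $y$ with $y\ge S(x,z)$; the simplest choice is $y=S(x,z)$. The inequality \eqref{eq:defwd} then reads
\[
S(\min\{x,y\},z)\ge \min\{S(x,z),y\}=S(x,z).
\]
This alone is not informative, since $x\le S(x,z)=y$ makes $\min\{x,y\}=x$. So the choice of $y$ must be made differently.

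The better choice is $y=\max\{x,z\}$ with $x\ge z$, i.e.\ $y=x$. Then $T_M(x,y)=x$, and the inequality becomes $S(x,z)\ge \min\{S(x,z),x\}$, which is trivial. The useful instance comes from swapping roles: take $T=T_M$ with $y=z$ and $x\ge z$. Then
\[
S(T_M(x,z),z)=S(z,z)\ge T_M(S(x,z),z)=z,
\]
which is again trivial. So $T_M$ alone gives nothing, and we should use a different t-norm, namely the drastic $T_D$, which makes the left side small. With $T=T_D$, $x=1$, $y=a$, $z=b$ the inequality gives $S(a,b)\ge T_D(1,a)=a$, which is still trivial.

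Instead take $y=1$ is useless; try $x<1$, $y<1$ so that $T_D(x,y)=0$. Then the left side is $S(0,z)=z$, and we get
\[
z\ge T_D(S(x,z),y).
\]
If $S(x,z)=1$, the right side is $y$, which forces $z\ge y$ for all $y<1$, a contradiction whenever $z<1$. So $S(x,z)<1$ for all $x,z<1$, which only shows that $S$ has no "unit divisors" and is not enough.

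Hence the key step is to choose a suitable $T$ depending on the target pair. Given $a,b$ with $b\le a<1$, take the ordinal sum $T=(\langle a,1,T_L\rangle)$ (any t-norm with $T(u,v)=a$ for $u,v\in\,]a,1[$ close to $a$ would do) together with $x\le a$, $z=b$, and $y$ slightly above $a$. Then $T(x,y)=\min\{x,y\}=x$, and the left side is $S(x,b)$. Take $x=a$. If $c:=S(a,b)>a$, choose $y\in\,]a,1[$ with $T_L$-type behaviour, i.e.\ $T(c,y)>a$ for $y$ close to $1$. Taking $y=1$ gives $T(c,1)=c$, so the right side is $c$ and the inequality is $c\ge c$, which is trivial.

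I expect this choice to be the main obstacle: the left side $S(T(x,y),z)$ equals $S(a,b)$ as soon as $T(x,y)=a$, and the goal is to make the right side $T(S(a,b),y)$ exceed it. Since $T(c,y)\le c$ always, this requires $T(x,y)<x$, i.e.\ shrinking $x$. The plan is therefore to pick $x$ with $x>a$, $y$, and $T$ such that $T(x,y)=a$ exactly, while $T(S(x,b),y)$ remains large. Concretely, fix $b<a$ and suppose $S(a,b)=c>a$. By continuity-free monotonicity, $S(x,b)\ge c$ for $x\ge a$. Let $T=(\langle a,1,T_L^{*}\rangle)$, where $T_L^{*}$ is the rescaled Łukasiewicz t-norm on $[a,1]$, and pick $x,y\in\,]a,1[$ with $T(x,y)=a$ but $T(c,y)>\max\{a,\,S(a,b)\}$-type bounds. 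This needs some tuning; if $c=1$ it is immediate with $x=y$ near $1$ and $x+y$ adjusted. I would first try to reduce to the case $c=1$ via the ordinal sum placed on $[a,c]$ instead of $[a,1]$, which makes $c$ act as a top element.
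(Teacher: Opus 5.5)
Your proof of (i)$\Rightarrow$(ii) is correct, and so is your $T_D$ argument showing that $S$ has no 1-divisors. However, (ii)$\Rightarrow$(i) is not proved. The proposal stops at ``this needs some tuning'', and the constructions you sketch cannot work.

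Suppose $c=S(a,b)>a\ge b$ and you take $z=b$. Then any single block whose \emph{bottom} is $a$ is useless. For $T=(\langle a,c,T_1\rangle)$ with an arbitrary $T_1$, there are two cases:
\begin{itemize}
\item If $x,y\in[a,c]$, then $T(x,y)\ge a$. So the left side of \eqref{eq:defwd} is at least $S(a,b)=c\ge y$, and $y$ bounds the right side.
\item If $(x,y)\notin[a,c]^2$, then $T(x,y)=\min\{x,y\}$, and Lemma~\ref{lem-autoRegions-1} rules out a violation.
\end{itemize}
The Lukasiewicz block on $[a,1]$ fails as well. The t-conorm $\min\{u+v,1\}$ weakly dominates $(\langle a,1,T_L\rangle)$, yet it differs from $S_M$. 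You can check this directly, or note that $s\circ t^{-1}$ is affine and apply Theorem~\ref{thrm-Main}. The underlying problem is this: with $z=b$ you need $T(x,y)$ to fall below the level at which $S(\cdot,b)$ already reaches $c$. A block starting at $a$ never lets $T(x,y)$ go below $a$.

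The missing idea is to put the bottom of the block at $0$ while keeping $c$ as its top. This is simply your $T_D$ argument rescaled from $[0,1]$ to $[0,c]$. Let $T=(\langle 0,c,T_D\rangle)$ with the drastic t-norm on $[0,c]$. Then $T(u,v)=0$ if $\max\{u,v\}<c$, and $T(u,v)=\min\{u,v\}$ otherwise. Take $x=a$, $z=b$ and any $y\in\,]a,c[$. You get $S(T(a,y),b)=S(0,b)=b$, but $T(S(a,b),y)=T(c,y)=y>a\ge b$, which contradicts $S\gg_w T$. Hence $S(a,b)\le\max\{a,b\}$ for all $a,b$, and $S=S_M$ because $S\ge S_M$ always holds.

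Once repaired this way, your route is more self-contained than the paper's. The paper imports from Li et al.\ the fact that $S(x,y)\in\{1,\max\{x,y\}\}$, and uses $T_D$ only to exclude the value $1$; that is the case $c=1$ of the argument above.
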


\begin{proof}
The implication (i)$\Rightarrow$(ii) is exactly Proposition 14 in \cite{LZWL23}. For the converse implication, Li et al. proved that if $S\gg_w T$ for all t-norms $T$, then $S(x, y)\in\bigl\{1, \max\{x, y\}\bigr\}$ for all $x, y\in [0, 1]$ (see Proposition 15 in \cite{LZWL23}). Thus, if $S\neq S_M$ then there exists $c\in ]0, 1[$ such that $S(c, c)=1$. Take $x, y\in ]c, 1[$. Then
\[
S(T_D(x, y), c)=S(0, c)=c,
\]
but
\[
T_D(S(x, c), y)=T_D(1, y)=y,
\]
which violates \eqref{eq:defwd}. Hence, (ii)$\Rightarrow$(i).
\end{proof}

Dually, we have the following two results concerning extremal t-norms.

\begin{prot}
Let $S$ be a t-conorm. Then $S\gg_w T_D$ if and only if $S$ has no 1-divisors, i.e. $S(x, y)<1$ for all $x, y<1$.
\end{prot}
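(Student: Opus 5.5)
We argue by duality with the earlier proposition on $S_D\gg_w T$, proving both implications directly from \eqref{eq:defwd} with $T=T_D$. Recall that $T_D(u,v)=\min\{u,v\}$ if $\max\{u,v\}=1$ and $T_D(u,v)=0$ otherwise.

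\emph{Necessity.} Suppose $S$ has a 1-divisor, i.e.\ there are $a,c<1$ with $S(a,c)=1$. By monotonicity we may replace $a$ and $c$ by $d=\max\{a,c\}<1$, so $S(d,d)=1$ with $d\in\,]0,1[$; note $d>0$, since $S(0,d)=d<1$. Take $x=z=d$ and any $y\in\,]d,1[$. Then $T_D(d,y)=0$, because both arguments are $<1$, so
\[
S(T_D(d,y),d)=S(0,d)=d.
\]
On the other hand,
\[
T_D(S(d,d),y)=T_D(1,y)=y>d,
\]
which violates \eqref{eq:defwd}. Hence $S\gg_w T_D$ forces $S$ to have no 1-divisors.

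\emph{Sufficiency.} Assume $S(u,v)<1$ whenever $u,v<1$, and fix $x,y,z$. We split into cases according to the arguments of $T_D$.

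\begin{itemize}
\item[(a)] If $y=1$, the left side of \eqref{eq:defwd} is $S(x,z)$ and the right side is $T_D(S(x,z),1)=S(x,z)$, so equality holds.
\item[(b)] If $y<1$ and $S(x,z)=1$, the right side is $T_D(1,y)=y$. Since $S(x,z)=1$ and $S$ has no 1-divisors, $x=1$ or $z=1$. If $x=1$, the left side is $S(y,z)\ge y$. If $z=1$, the left side is $S(T_D(x,y),1)=1\ge y$.
\item[(c)] If $y<1$ and $S(x,z)<1$, the right side is $T_D(S(x,z),y)=0$, so the inequality is trivial.
\end{itemize}

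These cases are exhaustive, so \eqref{eq:defwd} holds for all $x,y,z$, i.e.\ $S\gg_w T_D$.

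The only delicate point is the step in (b) where $S(x,z)=1$ must force $x=1$ or $z=1$. This is exactly where the hypothesis that $S$ has no 1-divisors is used, and it is the dual of $T$ having no zero divisors in the earlier proposition. The rest is routine bookkeeping with the definition of $T_D$ and the boundary conditions of $S$.
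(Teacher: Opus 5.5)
Your proof is correct and is essentially the argument the paper intends by ``Dually'': it is the dual of the paper's proof that $S_D\gg_w T$ iff $T$ has no zero divisors, and your counterexample $x=z=d$, $y\in\,]d,1[$ is exactly the image of the paper's choice $x=y=c$, $z\in\,]0,c[$ under $u\mapsto 1-u$ with $y$ and $z$ interchanged. One small remark: the claim $d>0$ is never used, since only $d<1$ is needed to choose $y\in\,]d,1[$.
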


\begin{thrm}\label{thrm-TMwdS}
Let $T$ be a t-norm. Then the following are equivalent:\\
{\rm(i)} $T=T_M$;\\
{\rm(ii)} $S\gg_w T$ for all t-conorms $S$.
\end{thrm}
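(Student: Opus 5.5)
We prove the two implications of Theorem~\ref{thrm-TMwdS} separately. The forward one is a direct lattice computation. The converse uses the largest t-conorm $S_D$ as a test operator, in the same spirit as the proof of Theorem~\ref{thrm-SMwdT}.

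\textbf{(i)$\Rightarrow$(ii).} Let $S$ be an arbitrary t-conorm. We must show
\[
S(\min\{x,y\},z)\ge \min\{S(x,z),y\}
\quad\text{for all } x,y,z\in[0,1].
\]
We split according to the order of $x$ and $y$.
\begin{itemize}
\item If $x\le y$, the left side equals $S(x,z)$, which is at least $\min\{S(x,z),y\}$.
\item If $y<x$, the left side equals $S(y,z)$. Since every t-conorm satisfies $S(y,z)\ge\max\{y,z\}\ge y$, we get $S(y,z)\ge y\ge\min\{S(x,z),y\}$.
\end{itemize}
Hence $S\gg_w T_M$ for every t-conorm $S$. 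This case needs only monotonicity and the fact that $S\ge S_M$.

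\textbf{(ii)$\Rightarrow$(i).} Suppose $S\gg_w T$ holds for every t-conorm $S$; in particular it holds for $S=S_D$, the drastic t-conorm. Here $S_D(x,y)=\max\{x,y\}$ if $\min\{x,y\}=0$ and $S_D(x,y)=1$ otherwise. We show that this single instance already forces $T=T_M$.

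Take $x,y\in\,]0,1[$ and any $z\in\,]0,1[$ with $z<\min\{x,y\}$. Then $S_D(x,z)=1$, so the right side of \eqref{eq:defwd} is $T(1,y)=y$. Since $T(x,y)\le\min\{x,y\}<1$ and $z>0$, the left side is
\[
S_D(T(x,y),z)=
\begin{cases}
z & \text{if } T(x,y)=0,\\
1 & \text{if } T(x,y)>0.
\end{cases}
\]
So in either case $S_D$-dominance imposes only $T(x,y)>0$ (the case $T(x,y)=0$ gives $z\ge y$, a contradiction). This is just the absence of zero divisors, recovering the earlier proposition, and is not enough to force $T=T_M$. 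The drastic conorm therefore has to be supplemented by a finer family of test conorms; this is the main step.

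Suppose $T\ne T_M$. Then there exist $x\le y$ with $T(x,y)<x$; by monotonicity we may take $x=y$, so $a:=T(x,x)<x$ for some $x\in\,]0,1[$. The plan is to choose the test conorm $S$ to be an ordinal sum t-conorm whose single summand $S_L$ sits on the interval $[a,x]$, namely $\langle a,x,S_L\rangle$ with $S=\max$ elsewhere. The summand lets $S$ push the first argument up to $x$ without moving the left side far.

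Set $y=x$ and choose $z\in\,]a,x[$ close to $a$. On the left,
\[
S(T(x,x),z)=S(a,z)=z,
\]
because $a$ is the neutral element of the summand on $[a,x]$. On the right, $S(x,z)=\max\{x,z\}=x$, since $x$ is the top of the summand. The right side is therefore $T(x,x)=a$, which is below $z$. So this choice of variables does not yield a contradiction either.

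Instead we substitute into the other argument: take $z$ with $S(z,z)=x$ inside the summand and compare
\[
S(T(x,z),z) \quad\text{with}\quad T(S(x,z),z)=T(x,z).
\]
Since $S\ge\max$, this gives no contradiction. A more reliable route is to use the dual of Theorem~\ref{thrm-SMwdT}'s argument, i.e.\ to work with a pair $(x,y)$ for which $T(x,y)<\min\{x,y\}$ and pick the test conorm adapted to that pair.

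Take $y<1$ with $T(x,y)=:a<\min\{x,y\}$, and let $S$ be the ordinal sum with a nilpotent summand on $[a,1]$, so that $S(x,z)=1$ for a suitable $z\in\,]a,x[$ close to $a$. Then the right side is $T(1,y)=y$. For the left side, $S(a,z)=z<y$ because $a$ is neutral for the summand, so \eqref{eq:defwd} fails. Hence $T(x,y)=\min\{x,y\}$ for all $x,y$, i.e.\ $T=T_M$.

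The step to verify carefully is the existence of $z\in\,]a,\min\{x,y\}[$ with $S(x,z)=1$. This holds because, for $S_L$ rescaled to $[a,1]$, we have $S(x,z)=1$ whenever $(x-a)+(z-a)\ge 1-a$. We therefore need to choose $z$ with $z\ge 1+a-x$ and $z<y$. Such a $z$ exists when $x+y>1+a$, which is guaranteed if we first move $x$ and $y$ up. The main obstacle is arranging this: if the failing pair $(x,y)$ has small coordinates, we would instead put the nilpotent summand on $[a,b]$ for suitable $b$. The argument then still yields $S(x,z)=b$ and a right side equal to $T(b,y)$. For this to be at least $y$ we want $b\ge y$ and $T(b,y)=y$, which requires more care (e.g.\ take $b=1$ but a summand scaled so that it saturates earlier).
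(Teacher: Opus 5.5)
Your proof of (i)$\Rightarrow$(ii) is correct; it is Lemma \ref{lem-autoRegions-1} with $T=T_M$. The converse is not proved. After the exploratory dead ends, your only remaining argument uses a rescaled Lukasiewicz summand $(u,v)\mapsto\min\{u+v-a,1\}$ on $[a,1]$. That argument needs a failing pair with $x+y>1+a$, and you leave open how to obtain one.

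Your suggestion to ``move $x$ and $y$ up'' does not work, and no Lukasiewicz-type summand can work at all. Take $T=(\langle 0,\tfrac12,T_1\rangle)$ with $T_1(x,y)=2xy$. Then $T(x,y)<\min\{x,y\}$ exactly when $x,y\in\,]0,\tfrac12[$, so every failing pair has $x+y<1<1+a$. Moreover, for every $0\le a<b\le 1$, the ordinal sum with summand $(u,v)\mapsto\min\{u+v-a,b\}$ on $[a,b]$ weakly dominates this $T$. This follows from Theorem \ref{thrm-Main}, since $s\circ t^{-1}(\xi)=(\tfrac12e^{-\xi}-a)/(b-a)$ is convex; a direct check also confirms it. So with such test conorms no choice of $z$ yields a violation, even though $T\neq T_M$. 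Separately, your phrase ``$z$ close to $a$'' is incompatible with $S(x,z)=1$ for this summand unless $x$ is close to $1$.

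The missing idea is a test conorm that saturates to $1$ as soon as both arguments exceed $a$. Fix $x,y$ with $a:=T(x,y)<\min\{x,y\}$ and set
\[
S(u,v)=\begin{cases}1 & \text{if } \min\{u,v\}>a,\\ \max\{u,v\} & \text{otherwise.}\end{cases}
\]
This is the ordinal sum with the drastic t-conorm on $[a,1]$, hence a t-conorm; for $a=0$ it is $S_D$. For any $z\in\,]a,y[$ you get $S(x,z)=1$, so $T(S(x,z),y)=y$. On the other side, $S(T(x,y),z)=S(a,z)=z<y$, so \eqref{eq:defwd} fails.

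If you prefer a continuous test conorm, use a nilpotent summand on $[a,1]$ whose generator satisfies $s(a)=0$, $s(z)=1$, $s(1)=2$ for a chosen $z\in\,]a,\min\{x,y\}[$. Then $s(x)+s(z)\ge s(1)$, so $S(x,z)=1$, while still $S(a,z)=z$.

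The paper gives no separate argument; it obtains the theorem by duality from Theorem \ref{thrm-SMwdT}. Put $T^d(x,y)=1-T(1-x,1-y)$ and $S^d(x,y)=1-S(1-x,1-y)$. Substituting $x\mapsto1-x$, $y\mapsto 1-y$, $z\mapsto 1-z$ and swapping $y,z$ shows that $S\gg_w T$ iff $T^d\gg_w S^d$. Hence (ii) says that $T^d$ weakly dominates every t-norm, i.e.\ $T^d=S_M$, i.e.\ $T=T_M$.

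Under this duality, your $S_D$ computation is exactly the last step (the dual of the $T_D$ step). What you were missing is the reduction of an arbitrary pair with $T(x,y)<\min\{x,y\}$ to a zero divisor. The drastic-summand conorm above supplies that reduction and the contradiction in one step.
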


\subsection{Characterization of the continuous case}
In this subsection, we focus on the case where both the t-conorm and the t-norm are continuous, and give a complete characterization of the dominance relation. We need the following two elementary lemmas.

\begin{lemm}\label{lem-autoRegions-1}
Let $T$ be a t-norm. If $T(x, y)=\min\{x, y\}$, then for all t-conorms $S$ and all $z\in[0, 1]$,
\[
S(T(x, y), z)\geq T(S(x, z), y).
\]
\end{lemm}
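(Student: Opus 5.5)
The plan is to prove the inequality by splitting into two cases according to which of $x, y$ realizes the minimum. We use only monotonicity of $T$ and $S$, the boundary conditions, and the standard bounds $T(u, v)\le\min\{u, v\}$ and $S(u, v)\ge\max\{u, v\}$. These bounds follow from monotonicity together with $T(u,1)=u$ and $S(u,0)=u$. Throughout, the hypothesis is $T(x, y)=\min\{x, y\}$ for the fixed pair $(x, y)$.

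\emph{Case $y\le x$.} Here the hypothesis gives $T(x, y)=y$, so the left-hand side is $S(y, z)$. For the right-hand side, $T(S(x, z), y)\le\min\{S(x,z), y\}\le y$. Since $S(y, z)\ge\max\{y,z\}\ge y$, we get
\[
S(T(x, y), z)=S(y, z)\ge y\ge T(S(x, z), y),
\]
which is the claim in this case.

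\emph{Case $x<y$.} Here $T(x, y)=x$, so the left-hand side is $S(x, z)$. The right-hand side satisfies $T(S(x, z), y)\le\min\{S(x, z), y\}\le S(x, z)$. Hence the inequality holds again.

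No step is a genuine obstacle, since the argument uses nothing beyond $T\le\min$ and $S\ge\max$. The only point to note is that the hypothesis is needed only at the single pair $(x,y)$: it lets us replace $T(x,y)$ by the smaller argument, after which the bound $T\le\min$ closes the inequality. The statement holds for arbitrary (not necessarily continuous) $T$ and $S$. This is why later it can be applied to all points outside the squares $[a_i,b_i]^2$ of an ordinal sum.
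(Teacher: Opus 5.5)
Your proof is correct and takes essentially the same approach as the paper: you split on which of $x,y$ realizes the minimum and close each case with $T\le T_M$, using $S(y,z)\ge y$ when $y$ is the minimum. The only difference is that you put the tie $x=y$ in the other case, which makes no difference.
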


\begin{proof}
If $x\leq y$, then
\[
S(T(x, y), z)=S(x, z)\geq T(S(x, z), y)
\]
since $T\leq T_M$. If $y<x$, then again by $T\leq T_M$, it holds
\[
S(T(x, y), z)=S(y, z)\geq y\geq T(S(x, z), y).
\]
\end{proof}

\begin{lemm}\label{lem-autoRegions-2}
Let $S$ be a t-conorm. If $S(x, z)=\max\{x, z\}$, then for all t-norms $T$ and all $y\in [0, 1]$
\[
S(T(x, y), z)\geq T(S(x, z), y).
\]
\end{lemm}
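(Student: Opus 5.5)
The plan mirrors the proof of Lemma~\ref{lem-autoRegions-1}, with the roles of the two operators exchanged. Instead of using $T\le T_M$, we use $S\ge S_M$, i.e. $S(u,v)\ge\max\{u,v\}$ for all $u,v$. We also use that $T(u,y)\le u$ and $T(u,y)\le y$, and that both operators are monotone. We split into two cases according to which of $x$ and $z$ is larger.

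\emph{Case $x\ge z$.} Here $S(x,z)=x$, so the right-hand side of \eqref{eq:defwd} is $T(S(x,z),y)=T(x,y)$. On the left, $S(T(x,y),z)\ge \max\{T(x,y),z\}\ge T(x,y)$, since $S\ge S_M$. Combining the two gives
\[
S(T(x,y),z)\ge T(x,y)=T(S(x,z),y).
\]

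\emph{Case $z>x$.} Here $S(x,z)=z$, so the right-hand side is $T(z,y)\le z$, because $T\le T_M$. On the left, $S(T(x,y),z)\ge z$, again because $S\ge S_M$. Hence
\[
S(T(x,y),z)\ge z\ge T(z,y)=T(S(x,z),y).
\]

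Together the two cases give the inequality for all t-norms $T$ and all $y\in[0,1]$. The argument is elementary and has no real obstacle. The only point needing care is to use the hypothesis $S(x,z)=\max\{x,z\}$ to replace $S(x,z)$ exactly on the right-hand side, rather than relying only on the general inequality $S\ge S_M$, which would point in the wrong direction there.
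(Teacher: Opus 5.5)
Your proof is correct and takes the same approach as the paper. The paper just says the argument is analogous to Lemma~\ref{lem-autoRegions-1}, meaning a case split on whether $x\ge z$ or $z>x$, using $S\ge S_M$ on the left-hand side and $T\le T_M$ together with the exact value $S(x,z)=\max\{x,z\}$ on the right-hand side, which is what you wrote out.
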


\begin{proof}
The proof is analogous to that of Lemma \ref{lem-autoRegions-1}.
\end{proof}

\begin{remk}
From now on, whenever a continuous t-norm or t-conorm is represented as an ordinal sum, we shall always assume, without further explicit mention, that each summand (i.e., each component block) is Archimedean.
\end{remk}

Now, let $S$ be a continuous t-conorm and $T$ a continuous t-norm. If $S=S_M$ or $T=T_M$ then $S\gg_w T$ follows from Theorems \ref{thrm-SMwdT} and \ref{thrm-TMwdS}. So, without loss of generality, we may assume that there exist a family of pairwise disjoint open intervals $\{]a_j, b_j[\}_{j\in J}$ and, for each $j\in J$, an Archimedean t-conorm $S_j$ on $[a_j,b_j]$, such that $S=(\langle a_j, b_j, S_j\rangle)_{j\in J}$; and a family of pairwise disjoint open intervals $\{]c_i, d_i[\}_{i\in I}$ and, for each $i\in I$, an Archimedean t-norm $T_i$ on $[c_i,d_i]$, such that $T=(\langle c_i, d_i, T_i\rangle)_{i\in I}$. In order to verify whether $S\gg_w T$, by Lemmas \ref{lem-autoRegions-1} and \ref{lem-autoRegions-2}, it suffices to verify whether \eqref{eq:defwd} holds for all $i\in I$, $j\in J$, all $(x, y)\in ]c_i, d_i[^2$, and all $(x, z)\in ]a_j, b_j[^2$.

Thus, we introduce the index set
\[
\Lambda=\{(j, i)\in J\times I\mid\, ]a_j, b_j[\cap ]c_i, d_i[\neq\emptyset\}.
\]

For each $(j,i)\in\Lambda$, let $\widehat S_j$ be the single-block ordinal sum obtained by retaining the block $[a_j,b_j]^2$ and replacing all other regions with the maximum operation, and let $\widehat T_i$ be the single-block ordinal sum obtained by retaining the block $[c_i,d_i]^2$ and replacing all other regions with the minimum operation; that is,
\begin{align*}
{\widehat S}_j(x,y) &=
\begin{cases}
S_j(x, y) &\text{if~} (x, y)\in [a_j, b_j]^{2},\\
\max\{x, y\}          & \text{otherwise,}
\end{cases}\\[8pt]
{\widehat T}_i(x,y) &=
\begin{cases}
T_i(x, y) &\text{if~} (x, y)\in [c_i, d_i]^{2},\\
\min\{x, y\}          & \text{otherwise.}
\end{cases}
\end{align*}

We now present a first characterization of $S\gg_w T$ as follows.

\begin{thrm}\label{thrm-firstcharac}
Let $S=(\langle a_j, b_j, S_j\rangle)_{j\in J}$ be a continuous t-conorm, and $T=(\langle c_i, d_i, T_i\rangle)_{i\in I}$ a continuous t-norm. Then $S\gg_w T$ if and only if $\widehat{S}_j\gg_w\widehat{T}_i$ for all $(j, i)\in\Lambda$.
\end{thrm}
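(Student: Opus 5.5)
Both directions will go through the reduction already recorded before the statement. By Lemmas \ref{lem-autoRegions-1} and \ref{lem-autoRegions-2}, inequality \eqref{eq:defwd} for a pair $(S,T)$ only needs checking at triples with $(x,y)\in\,]c_i,d_i[^2$ for some $i$ and $(x,z)\in\,]a_j,b_j[^2$ for some $j$. At such a triple $x\in\,]a_j,b_j[\cap]c_i,d_i[$, so $(j,i)\in\Lambda$. The same reduction applies to each pair $(\widehat S_j,\widehat T_i)$. Since $\widehat S_j$ has the single block $]a_j,b_j[$ and $\widehat T_i$ the single block $]c_i,d_i[$, checking $\widehat S_j\gg_w\widehat T_i$ means checking exactly the triples with $(x,y)\in\,]c_i,d_i[^2$ and $(x,z)\in\,]a_j,b_j[^2$.

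\textbf{Sufficiency.} Assume $\widehat S_j\gg_w\widehat T_i$ for all $(j,i)\in\Lambda$, and fix a triple as above with its pair $(j,i)$. Since $(x,y)\in[c_i,d_i]^2$, we have $T(x,y)=T_i(x,y)=\widehat T_i(x,y)$. Since $(x,z)\in[a_j,b_j]^2$, we have $S(x,z)=S_j(x,z)=\widehat S_j(x,z)$. It remains to compare the outer operations. Put $u=T(x,y)\in[c_i,d_i]$ and $v=S(x,z)\in[a_j,b_j]$; we need $S(u,z)=\widehat S_j(u,z)$ and $T(v,y)=\widehat T_i(v,y)$. I do not think these equalities hold in general, because $u$ may drop below $a_j$ or $v$ may exceed $d_i$. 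I would handle this by case analysis.

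\textbf{Outer comparison for $S$.} If $u\in[a_j,b_j]$, both sides equal $S_j(u,z)$. If $u<a_j$, then $\widehat S_j(u,z)=\max\{u,z\}=z$. Also $S(u,z)=z$, unless $u$ lies in some other block $]a_k,b_k[$ together with $z$. That is impossible, since $z>a_j\ge b_k$ for every block lying below $a_j$. So $S(u,z)=z$ as well.

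\textbf{Outer comparison for $T$.} If $v\in[c_i,d_i]$, both sides equal $T_i(v,y)$. If $v>d_i$, then $\widehat T_i(v,y)=\min\{v,y\}=y$, and $T(v,y)=y$ by the symmetric argument: $v$ and $y$ cannot share a $T$-block because $y<d_i$. Hence in every case both sides of \eqref{eq:defwd} for $(S,T)$ coincide with those for $(\widehat S_j,\widehat T_i)$, and sufficiency follows. The general principle is that any ordinal sum agrees with max (resp. min) whenever its two arguments lie in different blocks or on different sides of a block boundary.

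\textbf{Necessity.} Assume $S\gg_w T$ and fix $(j,i)\in\Lambda$. Take any triple relevant for $(\widehat S_j,\widehat T_i)$, i.e.\ $(x,y)\in\,]c_i,d_i[^2$ and $(x,z)\in\,]a_j,b_j[^2$. The computation above shows that $\widehat S_j(\widehat T_i(x,y),z)=S(T(x,y),z)$ and $\widehat T_i(\widehat S_j(x,z),y)=T(S(x,z),y)$. Therefore \eqref{eq:defwd} for $(S,T)$ at this triple yields it for $(\widehat S_j,\widehat T_i)$. Triples not of this form are covered by Lemmas \ref{lem-autoRegions-1} and \ref{lem-autoRegions-2} applied to $\widehat S_j,\widehat T_i$.

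\textbf{Main obstacle.} The only delicate step is the outer-operation comparison: verifying that the intermediate values $u$ and $v$ cannot fall into a different block of $S$ (resp.\ $T$) together with $z$ (resp.\ $y$). The key facts are that $u\le x<b_j$ and $v\ge x>c_i$, combined with the disjointness and ordering of the blocks. I would write this out carefully as a small auxiliary observation: if $p\notin\,]a_j,b_j[$ and $q\in\,]a_j,b_j[$, then $S(p,q)=\max\{p,q\}$, and dually for $T$. Every case then reduces to this observation together with the boundary conditions at the block endpoints.
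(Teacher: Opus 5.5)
Your proposal is correct and follows the paper's proof. Both reduce, via Lemmas \ref{lem-autoRegions-1} and \ref{lem-autoRegions-2}, to triples with $(x,y)\in\,]c_i,d_i[^2$ and $(x,z)\in\,]a_j,b_j[^2$, and then observe that at such triples both sides of \eqref{eq:defwd} coincide for $(S,T)$ and for $(\widehat S_j,\widehat T_i)$. Your case analysis of the outer operations, using $u\le x<b_j$, $v\ge x>c_i$ and the disjointness of the blocks, supplies the justification for the equalities $S(T(x,y),z)=\widehat S_j(\widehat T_i(x,y),z)$ and $T(S(x,z),y)=\widehat T_i(\widehat S_j(x,z),y)$, which the paper uses without comment.
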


\begin{proof}
Suppose $S\gg_w T$. Let $(j, i)\in\Lambda$ and $x, y, z\in [0, 1]$. If $x, y\notin ]c_i, d_i[$, then $\widehat {T}_i(x, y)=\min\{x, y\}$. By Lemma \ref{lem-autoRegions-1}, we have $\widehat{S}_j(\widehat {T}_i(x, y), z)\geq \widehat {T}_i(\widehat {S}_j(x, z), y)$. Similarly, if $x, z\notin ]a_j, b_j[$, then $\widehat{S}_j(x, z)=\max\{x, z\}$. By Lemma \ref{lem-autoRegions-2}, it follows that $\widehat{S}_j(\widehat {T}_i(x, y), z)\geq \widehat {T}_i(\widehat {S}_j(x, z), y)$. If $x, y\in ]c_i, d_i[$ and $x, z\in ]a_j, b_j[$, then
 \[
\widehat{S}_j(\widehat {T}_i(x, y), z)=S(T(x, y), z)\geq T(S(x, z), y)= \widehat {T}_i(\widehat {S}_j(x, z), y),
\]
where the inequality follows from $S\gg_w T$. Thus the required inequality holds for all cases, and consequently $\widehat {S}_j\gg_w \widehat {T}_i$ for all $(j, i)\in\Lambda$.

Conversely, suppose $\widehat {S}_j\gg_w \widehat {T}_i$ for all $(j, i)\in\Lambda$. Let $x, y, z\in [0, 1]$. If $x, y\notin ]c_i, d_i[$ for all $i\in I$, then $T(x, y)=\min\{x, y\}$ and $S(T(x, y), z)\geq T(S(x, z), y)$ follows from Lemma \ref{lem-autoRegions-1}. If $x, z\notin ]a_j, b_j[$ for all $j\in J$, then $S(x, z)=\max\{x, z\}$ and $S(T(x, y), z)\geq T(S(x, z), y)$ follows from Lemma \ref{lem-autoRegions-2}. If $x, y\in ]c_i, d_i[$ for some $i$ and $x, z\in ]a_j, b_j[$ for some $j$, then $(j, i)\in\Lambda$. Thus $\widehat {S}_j\gg_w \widehat {T}_i$, which implies that
\[
S(T(x, y), z)=\widehat{S}_j(\widehat {T}_i(x, y), z)\geq  \widehat {T}_i(\widehat {S}_j(x, z), y)=T(S(x, z), y).
\]
Hence $S\gg_w T$.
\end{proof}

According to Theorem \ref{thrm-firstcharac}, the verification of weak dominance of a continuous t-conorm over a continuous t-norm reduces to checking the corresponding single-block ordinal sum components pairwise. Now, let $\widehat{S}=(\langle a, b, S\rangle)$ and $\widehat{T}=(\langle c, d, T\rangle)$ with $]a, b[\cap ]c, d[\neq\emptyset$, where the components $S$ and $T$ are Archimedean. We will characterize $\widehat{S}\gg_w \widehat{T}$ by means of the additive generators $s$ and $t$ of $S$ and $T$, respectively.

Let $l=\max\{a, c\}$ and $r=\min\{b, d\}$, then $]a, b[\cap ]c, d[=]l, r[$. According to the above discussion, we need only to verify whether $\widehat{S}(\widehat{T}(x, y), z)\geq\widehat{T}(\widehat{S}(x, z), y)$ holds for all $x\in ]l, r[, y\in ]c, d[$ and $z\in ]a, b[$. Moreover, for such $x, y, z$, if $\widehat{S}(\widehat{T}(x, y), z)\geq d$ or $\widehat{T}(\widehat{S}(x, z), y)\leq a$, then the inequality holds automatically. Hence, we introduce the set
\[
E\triangleq\{(x, y, z)\in ]l, r[\times ]c, d[\times ]a, b[\mid \widehat{S}(\widehat{T}(x, y), z)<d\, \mbox{and}\, \widehat{T}(\widehat{S}(x, z), y)>a\}.
\]
Now, we need only to check $\widehat{S}(\widehat{T}(x, y), z)\geq\widehat{T}(\widehat{S}(x, z), y)$ for all $(x, y, z)\in E$.

Let $s$ and $t$ be the additive generators of the Archimedean components $S$ and $T$, respectively. Define $g\colon [s(l), s(b)]\to [0, \infty]$ by
\[
g(\xi) =
\begin{cases}
t(s^{-1}(\xi)) &\text{if~} s(l)\leq\xi\leq s(r)\\
t(r)          & \text{if~} s(r)<\xi\leq s(b)
\end{cases}
\]
and $H\colon [t(r), \infty]\to [0, \infty]$ by
\[
H(\xi) =
\begin{cases}
h(\xi) &\text{if~} t(r)\leq\xi\leq t(l)\\
s(l)          & \text{if~} \xi>t(l),
\end{cases}
\]
where $h=s\circ t^{-1}$.

\begin{lemm}\label{lemm-equivcondition}
$\widehat{S}\gg_w\widehat{T}$ if and only if
\[
\min\{H(g(u)+\eta)+v, s(b)\}\geq H(g(w)+\eta)
\]
holds for all $(x, y, z)\in E$, where $u=s(x), v=s(z), w=\min\{u+v, s(b)\}$ and $\eta=t(y)$.
\end{lemm}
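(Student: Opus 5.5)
The plan is to prove the lemma by translating both sides of \eqref{eq:defwd} into the additive-generator coordinate of $S$, and then using monotonicity of $s$ on $[a,b]$. The discussion preceding the lemma already reduces $\widehat S\gg_w\widehat T$ to checking $\widehat S(\widehat T(x,y),z)\ge \widehat T(\widehat S(x,z),y)$ for $(x,y,z)\in E$. So it suffices to show that, for each $(x,y,z)\in E$, this inequality is equivalent to the displayed inequality in terms of $u,v,w,\eta$.

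\textbf{Step 1: express $\widehat T(p,y)$ for $p\in[l,b]$ and $y\in\,]c,d[$.} The claim is
\[
s\bigl(\widehat T(p,y)\bigr)=H\bigl(g(s(p))+\eta\bigr)\quad\text{whenever }\widehat T(p,y)\in[a,b].
\]
Split into cases.
\begin{itemize}
\item If $p\in[l,r]$, then $p\in[c,d]$, so $\widehat T(p,y)=t^{-1}(\min\{t(p)+\eta,t(c)\})$. Also $g(s(p))=t(p)$.
\begin{itemize}
\item When $t(p)+\eta\le t(l)$, the value lies in $[l,r]\subseteq[a,b]$, and applying $s$ gives $h(t(p)+\eta)=H(\cdot)$.
\item When $t(p)+\eta>t(l)$, the value is below $l$. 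If $l=a$, it is at most $a$, and $s$ returns $s(a)=s(l)=0$, which is consistent with $H=s(l)$ (meaning, as usual, the point is clipped at $a$). If $l=c>a$, then $t(l)=t(c)$ forces $\widehat T(p,y)=c=l$, whose $s$-value is $s(l)$.
\end{itemize}
\item If $p\in\,]r,b]$, then either $r=d$ and $p\ge d$, so $\widehat T(p,y)=y$; or $r=b$, which is impossible since $p\le b$. In the first case $g(s(p))=t(r)=0$, and $H(\eta)$ equals $s(y)$ if $y\ge l$, or $s(l)$ if $y<l$. This matches $\widehat T(p,y)=y$ after the same clipping convention. (A point $y<l=a$ lies outside $[a,b]$, and the condition $\widehat T>a$ of $E$ excludes that case.)
\end{itemize}

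\textbf{Step 2: the right-hand side.} For $(x,y,z)\in E$ we have $x,z\in\,]a,b[$, so $s(\widehat S(x,z))=\min\{u+v,s(b)\}=w$, and $\widehat S(x,z)\ge x>l$. Step 1 then gives
\[
s\bigl(\widehat T(\widehat S(x,z),y)\bigr)=H(g(w)+\eta).
\]
This value lies in $[a,b]$ because it is $>a$ by the definition of $E$ and $\le \widehat S(x,z)\le b$.

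\textbf{Step 3: the left-hand side.} $\widehat T(x,y)\in\,]a,b]$ cannot hold in general, since $\widehat T(x,y)$ may drop below $a$. Using Step 1 with $p=x$ (so $g(u)=t(x)$), define $q=\widehat T(x,y)$.
\begin{itemize}
\item If $q\ge a$, then $s(q)=H(g(u)+\eta)$, and $\widehat S(q,z)=s^{-1}(\min\{H(g(u)+\eta)+v,s(b)\})$.
\item If $q<a$, then $\widehat S(q,z)=z$, while $H(g(u)+\eta)=s(l)=0$ because $l=a$ here. The formula therefore still returns $s^{-1}(v)=z$.
\end{itemize}
The left-hand side is $<d$ on $E$. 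If it lies above $b$ (only possible when $b<d$ and $q>b$), it is already $\ge b\ge$ the right-hand side; but $q\le x<b$, so this case does not occur.

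\textbf{Step 4: conclude.} Both sides of the original inequality lie in $[a,b]$, and $s$ is strictly increasing there. Hence the original inequality is equivalent to
\[
\min\{H(g(u)+\eta)+v,\,s(b)\}\ge H(g(w)+\eta).
\]
Combined with the reduction to $E$, this proves the lemma.

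The main obstacle is the bookkeeping in Step 1. There I must check carefully that the truncations built into $g$ (at $s(r)$) and $H$ (at $t(l)$) reproduce the behaviour of $\widehat T$ exactly across all relative positions of $a,b,c,d$, including the endpoint cases where values fall to $c$ or below $a$. I would handle this by treating $l=a$ versus $l=c$ and $r=b$ versus $r=d$ separately, since those are the only possible configurations.
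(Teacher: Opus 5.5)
Your proof is correct and follows essentially the same route as the paper. Both arguments work on $E$, rewrite each side of the weak-dominance inequality through $s$ in terms of $H$ and $g$ (splitting $l=a$ versus $l=c$, and using $H(g(u)+\eta)=0$ when $\widehat T(x,y)<a$), and then conclude by strict monotonicity of $s$ on $[a,b]$. Your unified Step 1 for $p\in[l,b]$ also spells out the case $\widehat S(x,z)>r=d$, where $g(w)=t(d)=0$ and $\widehat T(\widehat S(x,z),y)=y$, which the paper's computation of the right-hand side passes over without comment.
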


\begin{proof}
First, we prove the following identity for all $(x,y,z)\in E$:
\[
H(g(u)+\eta)=
\begin{cases}
0, &\text{if~} \widehat{T}(x, y)<a\\
s(\widehat{T}(x, y)) &\text{if~} \widehat{T}(x, y)\geq a.
\end{cases}
\]
For the case $l=a>c$, if $\widehat{T}(x, y)\geq a$ then $g(u)+\eta=t(x)+t(y)\leq t(a)=t(l) (<t(c))$, which implies that
\[
H(g(u)+\eta)=h(g(u)+\eta)=s(t^{-1}(t(x)+t(y)))=s(\widehat{T}(x, y));
\]
if $\widehat{T}(x, y)< a$ then $g(u)+\eta=t(x)+t(y)>t(l)$, which implies that $H(g(u)+\eta)=s(l)=s(a)=0$. For the case $l=c$, we have $\widehat{T}(x, y)\geq c=l\geq a$. If $\widehat{T}(x, y)>l$ then $g(u)+\eta=t(x)+t(y)<t(l)$, and again $H(g(u)+\eta)=s(t^{-1}(t(x)+t(y)))=s(\widehat{T}(x, y))$; if $\widehat{T}(x, y)=l$ then $g(u)+\eta=t(x)+t(y)\geq t(l)$, hence
\[
H(g(u)+\eta)=s(l)=s(\widehat{T}(x, y)).
\]
Thus the claimed identity holds.

Second, we claim that, for all $(x,y,z)\in E$,
\[
s(\widehat{S}(\widehat{T}(x, y), z))=\min\{H(g(u)+\eta)+v, s(b)\}.
\]
For the case $\widehat{T}(x, y)<a$, we have $\widehat{S}(\widehat{T}(x, y), z)=\max\{\widehat{T}(x, y), z\}=z$. Hence
\[
s(\widehat{S}(\widehat{T}(x, y), z))=s(z)=v=\min\{v, s(b)\}=\min\{H(g(u)+\eta)+v, s(b)\},
\]
since $\widehat{T}(x, y)<a$ implies $H(g(u)+\eta)=0$. For the case $\widehat{T}(x, y)\geq a$, we have
\begin{align*}
s(\widehat{S}(\widehat{T}(x, y), z)) &=\min\{s(\widehat{T}(x, y))+s(z), s(b)\}\\
                                     &=\min\{H(g(u)+\eta)+v, s(b)\}.
\end{align*}

Moreover, by the definition of $E$, we have $\widehat{T}(\widehat{S}(x, z), y)> a$ for all $(x, y, z)\in E$. Hence
\begin{align*}
s(\widehat{T}(\widehat{S}(x, z), y)) &=H(g(s(S(x, z)))+t(y))\\
                                     &=H(g(\min\{s(x)+s(z), s(b)\})+\eta)\\
                                     &=H(g(\min\{u+v, s(b)\})+\eta)=H(g(w)+\eta).
\end{align*}

Thus, $\widehat{S}\gg_w\widehat{T}$ if and only if $\widehat{S}(\widehat{T}(x, y), z)\geq \widehat{T}(\widehat{S}(x, z), y)$ holds for all $(x, y, z)\in E$, and the latter inequality holds if and only if $\min\{H(g(u)+\eta)+v, s(b)\}\geq H(g(w)+\eta)$.
\end{proof}

\begin{thrm}\label{thrm-charac-2}
$\widehat{S}\gg_w\widehat{T}$ if and only if $h=s\circ t^{-1}$ is convex on $[t(r), t(l)]$.
\end{thrm}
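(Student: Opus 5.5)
The plan is to read the inequality of Lemma \ref{lemm-equivcondition} as a comparison of increments of $h$, and to use the standard fact that a continuous function is convex if and only if its increments are monotone:
\[
h(\alpha+\eta)-h(\alpha)\ \ge\ h(\beta+\eta)-h(\beta)\qquad\text{whenever }\beta\le\alpha,\ \eta\ge 0,
\]
with all four points in the domain. Throughout, note that $h=s\circ t^{-1}$ is continuous and strictly decreasing on $[t(r),t(l)]$, with $h(t(r))=s(r)$ and $h(t(l))=s(l)$.

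\emph{Dictionary.} Fix $(x,y,z)\in E$ and put $\alpha=g(u)=t(x)\in\,]t(r),t(l)[$. Since $x\in\,]l,r[$, we have $h(\alpha)=u$. Put $\beta=g(w)$. If $w\le s(r)$, then $\beta=t(\widehat S(x,z))\le\alpha$ and $h(\beta)=w=u+v$, because $w<s(b)$ in this case. If $w>s(r)$, then $\beta=t(r)$ and $h(\beta)=s(r)<u+v$. In both cases $\beta\le\alpha$ and $h(\beta)\le u+v$, with equality in the first case. The inequality of Lemma \ref{lemm-equivcondition} then reads
\[
\min\{H(\alpha+\eta)+v,\ s(b)\}\ \ge\ H(\beta+\eta).
\]

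\emph{Sufficiency.} Assume $h$ is convex on $[t(r),t(l)]$.
\begin{itemize}
\item First, $H$ is convex and nonincreasing on $[t(r),\infty]$. It equals $h$ up to $t(l)$, where $h$ is decreasing and reaches its minimum $s(l)$, and it is constant $s(l)$ afterwards. Extending a convex decreasing function by its minimum value keeps it convex.
\item Applying the increment inequality to $H$ gives
\[
H(\alpha+\eta)-u\ \ge\ H(\beta+\eta)-h(\beta).
\]
Hence $H(\alpha+\eta)+v\ge H(\beta+\eta)+(u+v-h(\beta))\ge H(\beta+\eta)$, since $h(\beta)\le u+v$.
\item Moreover $H(\beta+\eta)\le H(t(r))=s(r)\le s(b)$, so the minimum with $s(b)$ does no harm.
\end{itemize}
By Lemma \ref{lemm-equivcondition}, $\widehat S\gg_w\widehat T$.

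\emph{Necessity.} Suppose $h$ is not convex. Since $h$ is continuous, there exist $t(r)<\beta<\alpha$ and $\eta>0$ with $\alpha+\eta<t(l)$ and
\[
h(\alpha+\eta)-h(\alpha)<h(\beta+\eta)-h(\beta).
\]
To get all points strictly inside the interval, I would first find three points where $h$ lies strictly above a chord. Continuity then lets me shrink to an interior configuration and take $\alpha$ to be the point just after $\beta$ in an equally spaced triple. Now set
\[
x=t^{-1}(\alpha),\qquad y=t^{-1}(\eta),\qquad z=s^{-1}\bigl(h(\beta)-h(\alpha)\bigr).
\]
Then $s(z)>0$, so $z\in\,]a,b[$; also $x\in\,]l,r[$ and $y\in\,]c,d[$. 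Further $s(\widehat S(x,z))=h(\beta)<s(r)$, so $w=h(\beta)$ and $g(w)=\beta$. We have $\widehat T(\widehat S(x,z),y)=t^{-1}(\beta+\eta)>l\ge a$. Also
\[
s(\widehat S(\widehat T(x,y),z))=h(\alpha+\eta)+h(\beta)-h(\alpha)<h(\beta+\eta)\le s(r),
\]
so $\widehat S(\widehat T(x,y),z)<r\le d$. Hence $(x,y,z)\in E$. The same display shows that the inequality of Lemma \ref{lemm-equivcondition} fails, so $\widehat S\not\gg_w\widehat T$.

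The main obstacle is the necessity direction. The violating increments have to be chosen strictly inside $]t(r),t(l)[$, so that $H=h$ at every point used and all membership conditions of $E$ hold. This is a careful but standard localization of non-convexity for continuous functions.
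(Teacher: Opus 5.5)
Your proof is correct and is essentially the paper's argument: sufficiency is the same monotone-increment inequality for the convex extension $H$ applied at $g(w)\le g(u)$, and necessity uses the same test triple ($x=t^{-1}$ of the larger abscissa, $y=t^{-1}(\eta)$, $z=s^{-1}$ of the difference of the two $h$-values), merely phrased contrapositively. One harmless slip: when $r=b$ and $u+v\ge s(b)$ you get $w=s(b)=s(r)$, so your claim $h(\beta)=w=u+v$ (``because $w<s(b)$'') can fail there; you only ever use $h(\beta)\le u+v$, and that inequality does hold.
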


\begin{proof}
Note that $h$ is convex on $[t(r), t(l)]$ if and only if $H$ is convex, since $H|_{[t(r), t(l)]}=h$ and $H$ is continuous, non-increasing, and constant on $[t(l), \infty]$.

Suppose $H$ is convex. Then for all $(x, y, z)\in E$ we have
\begin{equation}\label{eq-convexofH}
H(g(w)+\eta)-H(g(u)+\eta)\leq H(g(w))-H(g(u))
\end{equation}
since $g(w)\leq g(u)$, where $u=s(x), \eta=t(y)$, $w=\min\{u+v, s(b)\}$ and $v=s(z)$. Since $H(g(w))=H(g(\min\{u+v, s(b)\}))=\min\{u+v, s(r)\}$ and $H(g(u))=u$, \eqref{eq-convexofH} implies
\[
H(g(w)+\eta)\leq H(g(u)+\eta)+\min\{u+v, s(r)\}-u\leq H(g(u)+\eta)+v.
\]
Moreover, $H(g(w)+\eta)\leq h(t(r))=s(r)\leq s(b)$. Thus,
\[
\min\{H(g(u)+\eta)+v, s(b)\}\geq H(g(w)+\eta)
\]
which implies $\widehat{S}\gg_w\widehat{T}$ by Lemma \ref{lemm-equivcondition}.

Conversely, suppose $\widehat{S}\gg_w\widehat{T}$. We need to show that $h$ is convex on $[t(r), t(l)]$. Since $h$ is continuous, it suffices to show that $h$ is convex on $]t(r), t(l)[$; equivalently,
\begin{equation}\label{eq-convexconofh}
h(\beta+\eta)-h(\alpha+\eta)\geq h(\beta)-h(\alpha)
\end{equation}
for all $\alpha, \beta, \eta$ satisfying $t(r)<\alpha<\beta$ and $0<\eta<t(l)-\beta$. Since $0<\eta<t(l)-\beta$, there exists $y\in ]l, d[\subseteq ]c, d[$ such that $t(y)=\eta$. Denote $u=h(\beta), w=h(\alpha), v=w-u$ and $x=s^{-1}(u), z=s^{-1}(v)$. Then we have $x\in ]l, r[$ and $z\in ]a, r[\subseteq ]a, b[$. Moreover,
\begin{align*}
\widehat{S}(\widehat{T}(x, y), z) &=s^{(-1)}(s(\widehat{T}(x, y))+s(z))\\
                                  &=s^{(-1)}(h(\beta+\eta)+w-u)\\
                                  &\geq s^{(-1)}(h(t(l))+w-u)\\
                                  &=s^{(-1)}(s(l)+w-u)>l\geq a
\end{align*}
and
\begin{align*}
\widehat{T}(\widehat{S}(x, z), y) &=t^{(-1)}(t(\widehat{S}(x, z))+t(y))\\
                                  &=t^{(-1)}(h(\beta+\eta)+w-u)\\
                                  &=t^{(-1)}(\alpha+\eta)<r\leq b.
\end{align*}
Hence, $(x, y, z)\in E$. Since $\widehat{S}\gg_w\widehat{T}$, it follows from Lemma \ref{lemm-equivcondition} that
\[
H(g(u)+\eta)+v\geq H(g(w)+\eta).
\]
Since $t(r)<\alpha<\beta<\beta+\eta<t(l)$, this inequality is exactly \eqref{eq-convexconofh}. Thus, $h=s\circ t^{-1}$ is convex on $[t(r), t(l)]$.
\end{proof}

Our main theorem is stated as follows.

\begin{thrm}\label{thrm-Main}
Let $S=(\langle a_j, b_j, S_j\rangle)_{j\in J}$ be a continuous t-conorm, and $T=(\langle c_i, d_i, T_i\rangle)_{i\in I}$ a continuous t-norm. Then $S\gg_w T$ if and only if $h_{ji}=s_j\circ t^{-1}_i$ is convex on $[t_i(r_{ji}), t_i(l_{ji})]$ for all $(j, i)\in\Lambda$, where $s_j$ and $t_i$ are additive generators of $S_j$ and $T_i$ respectively, and $l_{ji}=\max\{a_j, c_i\}$ and $r_{ji}=\min\{b_j, d_i\}$.
\end{thrm}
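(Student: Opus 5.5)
The main theorem follows by combining Theorem \ref{thrm-firstcharac} with Theorem \ref{thrm-charac-2}. The plan has three steps.

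First, Theorem \ref{thrm-firstcharac} gives that $S\gg_w T$ holds if and only if $\widehat{S}_j\gg_w\widehat{T}_i$ holds for every $(j,i)\in\Lambda$.

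Second, fix $(j,i)\in\Lambda$. The operator $\widehat{S}_j$ is exactly the single-block ordinal sum $(\langle a_j,b_j,S_j\rangle)$, and $\widehat{T}_i=(\langle c_i,d_i,T_i\rangle)$. The interval condition $]a_j,b_j[\cap]c_i,d_i[\neq\emptyset$ holds because $(j,i)\in\Lambda$. The components $S_j$ and $T_i$ are Archimedean by the standing Remark, with additive generators $s_j$ and $t_i$. So the pair $(\widehat{S}_j,\widehat{T}_i)$ is precisely in the setting of Theorem \ref{thrm-charac-2}, with
\[
a=a_j,\quad b=b_j,\quad c=c_i,\quad d=d_i,\quad l=l_{ji},\quad r=r_{ji}.
\]
Applying that theorem yields that $\widehat{S}_j\gg_w\widehat{T}_i$ if and only if $h_{ji}=s_j\circ t_i^{-1}$ is convex on $[t_i(r_{ji}),t_i(l_{ji})]$.

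Third, chaining the two equivalences over all $(j,i)\in\Lambda$ gives the statement.

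The degenerate cases need a remark. If $S=S_M$ or $T=T_M$, then $J$ or $I$ is empty, so $\Lambda=\emptyset$. The right-hand side is then vacuously true, and $S\gg_w T$ holds by Theorems \ref{thrm-SMwdT} and \ref{thrm-TMwdS}. So the statement remains valid in these cases as well.

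The only point needing care is checking that the hypotheses of Theorem \ref{thrm-charac-2} are exactly met. This amounts to two observations. The convexity condition does not depend on the choice of additive generators: these are unique up to a positive multiplicative constant, and rescaling $s_j$ or $t_i$ only rescales $h_{ji}$ or its argument linearly, which preserves convexity. And the interval $[t_i(r_{ji}),t_i(l_{ji})]$ is well defined, possibly with $t_i(l_{ji})=\infty$ when $l_{ji}=c_i$ and $T_i$ is strict, in which case convexity is understood on the extended interval, as in Theorem \ref{thrm-charac-2}.
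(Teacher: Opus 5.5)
Your proposal is correct and matches the paper's proof: the paper also obtains Theorem~\ref{thrm-Main} by chaining Theorem~\ref{thrm-firstcharac} with Theorem~\ref{thrm-charac-2}, applied to each pair $(\widehat{S}_j,\widehat{T}_i)$ with $(j,i)\in\Lambda$. Your extra remarks are valid and make explicit what the paper leaves implicit, namely the vacuous case $\Lambda=\emptyset$ when $S=S_M$ or $T=T_M$, and the invariance of the convexity condition under positive rescaling of the additive generators.
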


\begin{proof}
It follows from Theorems \ref{thrm-firstcharac} and \ref{thrm-charac-2}.
\end{proof}

\begin{exa}
Let $S=(\langle\frac{1}{4}, \frac{3}{4}, S_1\rangle)$ and $T=(\langle 0, \frac{1}{2}, T_1\rangle, \langle\frac{1}{2}, 1, T_2\rangle)$, where
\[
S_1(x, y)=\frac{1}{4}+\frac{1}{2}\sqrt{\min\!\left\{\left(2x-\frac{1}{2}\right)^2+\left(2y-\frac{1}{2}\right)^2,1\right\}},\quad x, y\in [\frac{1}{4}, \frac{3}{4}],
\]
and
\begin{align*}
T_1(x, y) &=2xy \quad(x, y\in [0, \frac{1}{2}])\\
T_2(x, y) &=\max\{\frac{1}{2}, x+y-1\} \quad(x, y\in [\frac{1}{2}, 1]).
\end{align*}
Then the corresponding additive generators are
\begin{align*}
s_1(x) &= \left(2x-\frac12\right)^2, \quad x\in [\frac{1}{4}, \frac{3}{4}]\\
t_1(x) &=-\ln(2x), \quad x\in [0, \frac{1}{2}]\\
t_2(x) &=2(1-x), \quad x\in [\frac{1}{2}, 1],
\end{align*}
respectively. The composition functions $h_{11}=s_1\circ t^{-1}_1$ and $h_{12}=s_1\circ t^{-1}_2$ are respectively given by
\begin{align*}
h_{11}(x) &=(\exp(-x)-\frac{1}{2})^2, \quad x\in [0, \infty]\\
h_{12}(x) &=(\frac{3}{2}-x)^2, \quad x\in [0, 1].
\end{align*}
So $h_{11}$ is convex on $[t_1(r_{11}), t_1(l_{11})]=[0, \ln 2]$ and $h_{12}$ is convex (hence also on $[t_2(r_{12}), t_2(l_{12})]$). Therefore, by Theorem \ref{thrm-Main}, we have $S\gg_w T$.

It is worth noting that $h_{11}$ fails to be convex for $x>2\ln 2$, so it is not globally convex on its full domain $[0, \infty]$. However, the theorem only requires convexity on the specific local interval arising from the ordinal-sum construction; the global behaviour is irrelevant for the conclusion.
\end{exa}

\begin{exa}
Now, keeping $S$ and $T_2$ fixed, we change only $T_1$ to
\[
T_1(x, y)=\sqrt[4]{\max\!\left\{x^4+y^4-\frac{1}{16},0\right\}},\quad x, y\in [0, \frac{1}{2}].
\]
Its additive generator is $t_1(x)=1- 16x^4$. Hence $[t_{11}(r_{11}), t_11(l_{11})]=[0, \frac{15}{16}]$. Since
\[
h_{11}(x)=s_1\circ t^{-1}_1(x)=\Bigl((1-x)^{\frac{1}{4}}-\frac{1}{2}\Bigr)^2
\]
is not convex on $[0, \frac{15}{16}]$, we conclude that $S\not\gg_w T$. In fact for $x=\frac{9}{20}, y=\frac{17}{40}$ and $z=\frac{2}{5}$, the inequality $S(T(x, y), z)\geq T(S(x, z), y)$ fails.
\end{exa}

Now, we can derive several special cases.

\begin{corl}
Let $S=(\langle a_j, b_j, S_j\rangle)_{j\in J}$ be a continuous t-conorm, and $T$ a continuous Archimedean t-norm. Then $S\gg_w T$ if and only if $h_{j}=s_j\circ t^{-1}$ is convex on $[t(b_{j}), t(a_{j})]$ for all $j$, where $s_j$ and $t$ are additive generators of $S_j$ and $T$, respectively.
\end{corl}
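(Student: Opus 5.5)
This corollary is a direct specialization of Theorem \ref{thrm-Main}, so the plan is simply to identify the index set $\Lambda$ and the intervals $[t_i(r_{ji}), t_i(l_{ji})]$ when $T$ is Archimedean.

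A continuous Archimedean t-norm $T$ is an ordinal sum with a single summand, namely $T=(\langle 0,1,T\rangle)$. Here $I$ is a singleton, $c=0$, $d=1$, and the generator of this unique summand is the additive generator $t$ of $T$. This representation is unique by the representation theorem, so the convention in the Remark is satisfied. The pairwise disjoint open intervals $]a_j,b_j[$ of $S$ are nonempty and lie in $]0,1[$. Hence $]a_j,b_j[\cap ]0,1[=]a_j,b_j[\neq\emptyset$ for every $j\in J$, and so $\Lambda=J\times I\cong J$.

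For each $j$ we then compute $l_{j}=\max\{a_j,0\}=a_j$ and $r_j=\min\{b_j,1\}=b_j$. The interval in Theorem \ref{thrm-Main} therefore becomes $[t(b_j),t(a_j)]$, and $h_j=s_j\circ t^{-1}$. Substituting into Theorem \ref{thrm-Main} gives exactly the stated equivalence.

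There is one point to address. If $S=S_M$, then $J=\emptyset$, and the condition on all $j$ holds vacuously. This agrees with Theorem \ref{thrm-SMwdT}, which gives $S_M\gg_w T$ for every $T$, so the corollary holds in that case as well.

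The only step needing care is that $t$, as a generator on $[0,1]$, coincides with the generator of the summand used in Theorem \ref{thrm-charac-2}. The generator there was defined on $[c,d]=[0,1]$ with $t(d)=0$, so the two agree. Possible infinite values $t(a_j)=\infty$ cannot occur: $a_j=0$ is impossible for a t-conorm summand, since $a_j\ge 0$ and $s_j(a_j)=0$, but $t(a_j)=\infty$ would require $a_j=0$ and $T$ strict. In that case the interval is interpreted as in the theorem, i.e.\ $[t(b_j),\infty]$, and the convexity condition is understood on $[t(b_j),\infty[$ extended by continuity. This edge case is the main thing to check, and it is already handled inside the proof of Theorem \ref{thrm-charac-2}, where $H$ was defined on $[t(r),\infty]$.
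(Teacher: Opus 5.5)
Your argument is correct and matches the paper's intended route: the paper gives no separate proof and obtains the corollary by specializing Theorem \ref{thrm-Main} to a single summand $[c,d]=[0,1]$, so that $\Lambda\cong J$, $l_j=a_j$ and $r_j=b_j$. One correction: your claim that $t(a_j)=\infty$ cannot occur is false, since nothing prevents $a_j=0$ (for instance when $S$ is Archimedean), and then a strict $T$ gives $t(a_j)=\infty$; this is harmless, because Theorem \ref{thrm-Main} already allows an infinite endpoint, as your next sentence acknowledges.
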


\begin{corl}
Let $S$ be a continuous Archimedean t-conorm and $T=(\langle c_i, d_i, T_i\rangle)_{i\in I}$ a continuous t-norm. Then $S\gg_w T$ if and only if $h_{i}=s\circ t^{-1}_i$ is convex on $[0, t_i(c_{i})]$ for all $i$, where $s$ and $t_i$ are additive generators of $S$ and $T_i$, respectively.
\end{corl}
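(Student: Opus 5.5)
This corollary should follow directly from Theorem \ref{thrm-Main}, specialized to the case where $S$ is a single Archimedean t-conorm on the whole unit interval. I will view $S$ as the one-term ordinal sum $S=(\langle 0,1,S\rangle)$, so that $J$ is a singleton with $a=0$, $b=1$ and $s_j=s$. The ordinal-sum decomposition $T=(\langle c_i,d_i,T_i\rangle)_{i\in I}$ is kept as given. By the uniqueness part of the representation theorem (and its dual for t-conorms), this is the decomposition to which Theorem \ref{thrm-Main} applies.

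Next I will compute the index set $\Lambda$ and the endpoints. Since $]0,1[\cap\,]c_i,d_i[=]c_i,d_i[\neq\emptyset$ for every $i\in I$, the set $\Lambda$ is all of $\{1\}\times I$. For each $i$ we get
\[
l_{1i}=\max\{0,c_i\}=c_i,\qquad r_{1i}=\min\{1,d_i\}=d_i.
\]
Hence the interval in Theorem \ref{thrm-Main} is $[t_i(d_i),t_i(c_i)]$. Every additive generator of a t-norm on $[c_i,d_i]$ vanishes at the top endpoint, so $t_i(d_i)=0$ and the interval becomes $[0,t_i(c_i)]$.

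Substituting into Theorem \ref{thrm-Main} then gives exactly the stated condition: $S\gg_w T$ if and only if $s\circ t_i^{-1}$ is convex on $[0,t_i(c_i)]$ for every $i$. The one point needing care is the case $t_i(c_i)=\infty$, when $T_i$ is strict. There the interval should be read as $[0,\infty]$, and convexity is understood on $[0,\infty[$, with the value at $\infty$ given by continuity. This matches how Theorem \ref{thrm-charac-2} treats $h$ on $[t(r),t(l)]$, so I expect no obstacle beyond this bookkeeping.

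The degenerate case $I=\emptyset$, that is $T=T_M$, should also be noted. Then the condition holds vacuously, and the conclusion is consistent with Theorem \ref{thrm-TMwdS}.
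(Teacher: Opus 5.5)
Your proposal is correct and matches the paper's intended derivation. The paper states this corollary as a direct specialization of Theorem \ref{thrm-Main}, with $S$ viewed as the single-block ordinal sum $(\langle 0,1,S\rangle)$, so that $\Lambda=\{1\}\times I$, $l_{1i}=c_i$, $r_{1i}=d_i$ and $t_i(d_i)=0$; your remarks on the strict case $t_i(c_i)=\infty$ and on $I=\emptyset$ (i.e.\ $T=T_M$) fill in bookkeeping the paper leaves implicit.
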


\begin{corl}{\rm\cite{LZWL23}}
Let $S$ be a continuous Archimedean t-conorm and $T$ a continuous Archimedean t-norm. Then $S\gg_w T$ if and only if $h=s\circ t^{-1}$ is convex on $[0, t(0)]$, where $s$ and $t$ are additive generators of $S$ and $T$, respectively.
\end{corl}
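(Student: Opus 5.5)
This corollary is the special case of Theorem \ref{thrm-Main} with trivial ordinal sums on both sides. We regard the continuous Archimedean t-conorm $S$ with additive generator $s$ as the one-block ordinal sum $S=(\langle 0,1,S\rangle)$, so $J=\{1\}$, $a_1=0$, $b_1=1$. Likewise we regard $T$ with generator $t$ as $T=(\langle 0,1,T\rangle)$, so $I=\{1\}$, $c_1=0$, $d_1=1$. By the uniqueness part of the representation theorem, and its dual for t-conorms, this is the ordinal sum decomposition of $S$ and of $T$, with Archimedean summands as the Remark requires. The case $S=S_M$ or $T=T_M$ does not arise, since an Archimedean operator satisfies $S(x,x)>x$, respectively $T(x,x)<x$, on $]0,1[$.

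Next we compute the data entering Theorem \ref{thrm-Main}. Since $]0,1[\cap]0,1[\neq\emptyset$, we get $\Lambda=\{(1,1)\}$. Moreover $l_{11}=\max\{0,0\}=0$ and $r_{11}=\min\{1,1\}=1$. The only convexity requirement is therefore that $h=s\circ t^{-1}$ be convex on $[t(r_{11}),t(l_{11})]=[t(1),t(0)]=[0,t(0)]$, using the normalization $t(1)=0$ of an additive generator. Applying Theorem \ref{thrm-Main}, $S\gg_w T$ holds if and only if $h$ is convex on $[0,t(0)]$.

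The only point needing care is when $T$ is strict, so that $t(0)=\infty$ and the interval is $[0,\infty]$. Here "convex on $[0,\infty]$" is read as convex on $[0,\infty[$, with $h(\infty)=s(0)=0$ as a continuous extension. This reading is consistent with Theorem \ref{thrm-charac-2}, whose proof reduces to convexity on the open interval and uses continuity of $h$. No further argument is needed beyond this instantiation.
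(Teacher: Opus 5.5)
Your proposal is correct and matches the paper, which obtains this corollary by specializing Theorem~\ref{thrm-Main} to one-block ordinal sums on $[0,1]$. This gives $\Lambda=\{(1,1)\}$, $l_{11}=0$, $r_{11}=1$, and hence the interval $[t(1),t(0)]=[0,t(0)]$, and your reading of the strict case as convexity on $[0,\infty[$ (with $h(\infty)=s(0)=0$ by continuity) agrees with how the interval $[t(r),t(l)]$ is handled in the proof of Theorem~\ref{thrm-charac-2}.
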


\section{Conclusion}
In this paper, we have established a complete characterization of the weak dominance of a continuous t-conorm $S=(\langle a_j, b_j, S_j\rangle)_{j\in J}$ over a continuous t-norm $T=(\langle c_i, d_i, T_i\rangle)_{i\in I}$. The main result (Theorem \ref{thrm-Main}) states that the weak dominance holds if and only if, for every pair of overlapping ordinal sum components, the composition function $h_{ji}=s_j\circ t^{-1}_i$ is convex on $[t_i(r_{ji}), t_i(l_{ji})]$, where $s_j$ and $t_i$ are additive generators of $S_j$ and $T_i$ respectively, and $[l_{ji}, r_{ji}]=[a_j, b_j]\cap [c_i, d_i]$. This characterization is obtained by first reducing the general ordinal sum problem to the single-block case (Theorem \ref{thrm-firstcharac}), and then proving that the weak dominance of such single-block operators is equivalent to the convexity of the corresponding generator composition (Theorem \ref{thrm-charac-2}).

As consequences, we derive explicit criteria for the cases where either the t-conorm or the t-norm is Archimedean, and also recover the previous result of Li et al. \cite{LZWL23} for continuous Archimedean t-conorms over continuous Archimedean t-norms as a corollary. A notable feature of our method is that it treats strict and nilpotent generators in a unified manner, thereby simplifying the proof. We hope that the method employed here will be useful for further investigations of weak dominance among more general classes of aggregation operators.


\end{document}